\newcommand{\R}{\mathbb{R}}
\renewcommand{\P}{\mathrm{P}}
\newcommand{\E}{\mathrm{E}}
\renewcommand{\geq}{\geqslant}
\renewcommand{\leq}{\leqslant}
\author{{Robert C. Dalang\footnote{Research partially supported by the Swiss National Foundation for Scientific Research.}}\, \ and {Fei Pu\footnote{Research supported by 
National Natural Science Foundation of China (No. 12201047), Beijing Natural Science Foundation (No. 1232010)
  and the Fundamental Research Funds for the Central Universities.}}
	}
\title{\bf\Large{Hitting with probability one for stochastic heat equations with additive noise}}
\date{}
\newtheorem{stat}{Statement}[section]
\newtheorem{prop}[stat]{Proposition}
\newtheorem{theorem}[stat]{Theorem}
\newtheorem{lemma}[stat]{Lemma}
\theoremstyle{definition}
\newtheorem{remark}[stat]{Remark}
\numberwithin{equation}{section}
\begin{document}
\maketitle
\begin{abstract}
We study the hitting probabilities of the solution to a system of $d$ stochastic heat equations with additive noise subject to Dirichlet boundary conditions. We show that for any bounded Borel set with positive $d-6$-dimensional capacity, the solution visits this set almost surely.
\end{abstract}

\noindent{\it \noindent MSC 2010 subject classification:}
Primary 60H15, 60J45; Secondary: 47D07, 60G60.\\
\smallskip
	\noindent{\it Keywords:}
	Hitting probability, stochastic heat equations, capacity, invariant measure.\\	
\smallskip
\noindent{\it Abbreviated title: Hitting with probability one for  SPDEs with additive noise}

\section{Introduction and main result}

Let $W_i = \{W_i(t, x): (t, x) \in [0, \infty[ \times [0, 1]\}$, $i = 1, \ldots, d$, be independent Brownian sheets defined on a probability space $(\Omega, \mathscr{F}, \mbox{P})$.
Set $W = (W_1, \ldots, W_d)$.
We consider the following system of $d$  stochastic heat equations subject to Dirichlet boundary conditions
 \begin{align}\label{eq1.1}
 \begin{cases}
 \frac{\partial }{\partial t}u(t, x) = \frac{\partial^2 }{\partial x^2}u(t, x)  + \nabla U(u(t,x)) + \frac{\partial^2}{\partial t\partial x}W(t, x), \quad t>0, x\in\, ]0, 1[\\
 u(t,0)= u(t, 1)=0, \quad t>0\\
 u(0, \cdot)= u_0\in C([0, 1], \mathbb{R}^d),
 \end{cases}
 \end{align}
 where $u=(u_1, \ldots, u_d)$, and the function $U: \mathbb{R}^d\mapsto \mathbb{R}$ is differentiable.  We assume that the function $U$ is bounded from above and all the partial derivatives
 $\frac{\partial U}{\partial u_i}$, $i=1, \ldots, d$ of $U$ are globally Lipschitz continuous and bounded.

For $t \geq 0$, let $\mathscr{F}_t = \sigma\{W(s, x), s \in [0, t], x \in [0, 1]\} \vee \mathcal{N}_0$, where $\mathcal{N}_0$ is the $\sigma$-field generated by $\mbox{P}$-null sets.
Let $u_0=(u_0^{(1)}, \ldots, u_0^{(d)})$.
 Following Walsh \cite{Wal86} and Dalang \cite{Dalang1999}, we say that $u$ is a mild solution of \eqref{eq1.1} if $u$ is predictable with respect to $(\mathscr{F}_t)_{t \geq 0}$ and if for $i \in \{1, \ldots, d\}$, $t \in \,]0, \infty[$ and $x \in [0, 1]$, almost surely,
  \begin{align}\label{eq1.2}
  u_i(t, x) &=  \int_0^1 G(t, x, v)u_0^{(i)}(v)dv + \int_0^t\int_0^1 G(t-r, x, v)\frac{\partial U}{\partial u_i}(u(r, v)) dvdr \nonumber\\
  &\quad +\int_0^t\int_0^1 G(t - r, x, v) W_i(dr, dv) ,
 \end{align}
 where the Green kernel $G(t, x, y)$ is given by
 \begin{align}\label{eq2018-02-26-2}
 G(t, x, y) = \sum\limits_{k = 1}\limits^{\infty}e^{-\pi^2k^2t}\phi_k(x)\phi_k(y)
 \end{align}
 with $\phi_k(x) := \sqrt{2}\sin(k\pi x)$; see, for example, \cite{BMS95} and \cite{Wal86}. Under the assumptions on $U$, equation \eqref{eq1.1} admits a unique mild solution; see \cite{Wal86}.

{Dalang, Khoshnevisan and Nualart \cite{DKN07} considers the equations \eqref{eq1.1} with drift term $b(u(t,x))$ instead of $\nabla U(u(t, x)))$ , and they established upper and lower bounds on hitting probabilities of the solution in terms of Hausdorff measure and Newtonian capacity respectively.  In the case $u_0\equiv 0$ and $b\equiv 0$, } according to \cite[Theorems 2.1 and 3.1]{DKN07}, given non-trivial compact intervals $I \subset \, ]0, \infty[$ and $J \subset \,  ]0, 1[$, there exists $c > 0$ depending on $M, I, J$ with $M > 0$, such that for all Borel sets $A \subseteq [-M, M]^d$,
\begin{align*}
c^{-1} \mbox{Cap}_{d - 6}(A) \leq \mbox{P}\{u(I \times J) \cap A \neq \emptyset \} \leq c \mathscr{H}_{d - 6}(A).
\end{align*}

Our goal is to show that if we do not restrict time to a compact interval, but consider the process for all time, then bounded Borel sets with positive $(d - 6)$-dimensional capacity are hit with probability $1$ by the solution to \eqref{eq1.1}. Our main result is the following.

 \begin{theorem}\label{th1.1}
For any bounded Borel set $A \subseteq \mathbb{R}^d$ with positive $(d - 6)$-dimensional capacity, the random field $\{u(t, x)\}_{(t, x) \in [0, \infty[\times [0, 1]}$, starting with any initial value $u_0 \in C([0, 1], \mathbb{R}^d)$, visits this set $A$  almost surely.
\end{theorem}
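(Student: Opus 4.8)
The plan is to combine the local hitting estimate from \cite{DKN07} with a Borel--Cantelli / Markov-chain type argument that uses the existence of an invariant measure for the solution. First I would establish that the solution $u$, viewed as a $C([0,1],\mathbb{R}^d)$-valued Markov process, admits an invariant probability measure $\mu$; this is where the hypotheses on $U$ enter, since $U$ bounded above and $\nabla U$ bounded and Lipschitz make the drift a gradient of a (shifted) potential, so the dynamics is a gradient system perturbed by space-time white noise, and standard arguments (Lyapunov function, compactness of the semigroup via the smoothing of $G$) yield existence of $\mu$. The key structural point is that, started from $\mu$, the spatial marginal law of $u(t,\cdot)$ at a fixed point $x\in\,]0,1[$ has a density that is comparable to Lebesgue measure on compact sets, so that for a bounded Borel set $A$ with $\mathrm{Cap}_{d-6}(A)>0$ one gets, for a fixed window $I\times J$ with $J\ni x$,
\begin{align*}
\inf_{\nu}\,\mathrm{P}_\nu\{u(I\times J)\cap A\neq\emptyset\}\ge \delta>0,
\end{align*}
where the infimum is over initial laws $\nu$ supported in a large enough ball, using the lower bound $c^{-1}\mathrm{Cap}_{d-6}(A)\le \mathrm{P}\{u(I\times J)\cap A\neq\emptyset\}$ together with a Girsanov/absolute-continuity comparison between an arbitrary deterministic $u_0$ and the stationary start.

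Next I would iterate over disjoint time windows $I_n=[n T_0,(n+1)T_0]$: by the Markov property at times $nT_0$, the events $E_n=\{u(I_n\times J)\cap A\neq\emptyset\}$ each have conditional probability (given $\mathscr{F}_{nT_0}$) bounded below by some $\delta_n$. The difficulty is that the solution could drift off to where $U$ is very negative and the lower bound $\delta_n$ degrades; this is where boundedness of $U$ from above and of $\nabla U$ is used to show tightness of the laws of $u(nT_0,\cdot)$ — the invariant measure $\mu$ has sub-Gaussian tails in the sup-norm, and the chain does not escape to infinity — so that $\delta_n\ge \delta>0$ uniformly in $n$ along a subsequence (or after a fixed burn-in). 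Then $\sum_n \delta =\infty$, and a conditional Borel--Cantelli lemma (Lévy's extension) gives $\mathrm{P}\{E_n \text{ i.o.}\}=1$, hence $u$ hits $A$ almost surely.

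The main obstacle I anticipate is making the uniform lower bound genuinely uniform in the starting configuration: the estimate from \cite{DKN07} is stated for $u_0\equiv 0$, $b\equiv 0$, on a compact time interval, so I would need to (i) absorb the drift $\nabla U$ via Girsanov, noting that the Cameron--Martin density $\exp\big(\int\nabla U(u)\,dW-\tfrac12\int|\nabla U(u)|^2\big)$ has all moments because $\nabla U$ is bounded, and (ii) handle a general, possibly large, initial datum by either a coupling at the first window or by first running the chain to bring it inside a fixed ball with positive probability and then applying the window estimate. A secondary technical point is that the Green kernel bounds underlying \cite{DKN07} are for the Dirichlet heat kernel on $[0,1]$ and already encode the correct $d-6$ scaling, so no new potential-theoretic input is needed beyond quoting Theorems 2.1 and 3.1; the work is entirely in the long-time/Markov part, which is why the invariant measure is the crucial ingredient.
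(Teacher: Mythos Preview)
Your overall strategy---invariant measure plus recurrence, a uniform-in-initial-data lower bound on the hitting probability over a finite window obtained via Girsanov to remove the drift, and then an iteration using the Markov property---is exactly the skeleton of the paper's proof. The paper carries this out by (i) verifying strong Feller and irreducibility to get recurrence of $u(t,\cdot)$ with respect to both $B(0,N)$ and $B(0,K)^c$, (ii) proving the analogue of the \cite{DKN07} lower bound uniformly over $u_0\in \bar B(0,N)$ by redoing the one- and two-point density estimates with the deterministic shift $\lambda(t,x)=\int_0^1 G(t,x,v)u_0(v)\,dv$ and then applying Girsanov for the drift, and (iii) iterating via the strong Markov property.

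The one genuine difference, and the place where your sketch is thinner than it looks, is the iteration. You propose fixed time windows $I_n=[nT_0,(n+1)T_0]$ and the conditional Borel--Cantelli lemma; but for that you need $\sum_n \mathrm{P}(E_n\mid\mathscr{F}_{nT_0})=\infty$ almost surely, and your lower bound $\delta$ only applies when $u(nT_0,\cdot)\in\bar B(0,N)$. Tightness of the time-$nT_0$ marginals does \emph{not} give you that this happens for infinitely many $n$ almost surely; continuous-time recurrence to $B(0,N)$ does not immediately transfer to the discrete grid $\{nT_0\}$. Your proposed fix ``along a subsequence (or after a fixed burn-in)'' does not close this gap. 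The paper sidesteps the issue entirely by using \emph{random} stopping times: set $T_k=\inf\{t\ge S_{k-1}:\|u(t)\|_\infty>K\}$ and $S_k=\inf\{t\ge T_k:\|u(t)\|_\infty<N\}$, so that at each $S_{k-1}$ the process is certainly on $\partial B(0,N)$, the uniform lower bound applies, and one shows $\mathrm{P}^{u_0}(\exists\,t\in[S_{k-1},T_k]:\,u(t,\cdot)\text{ hits }A\mid\mathscr{F}_{S_{k-1}})\ge c>0$; iterating gives $1-(1-c)^n\to 1$. To make the window $[S_{k-1},T_k]$ contain a deterministic interval like $[1,2]$ with controlled probability, one simply chooses $K$ large enough that $\mathrm{P}(\sup_{0\le t\le 2}\|u(t)\|_\infty\ge K)$ is small compared to the capacity lower bound. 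This is precisely the ``first run the chain into a fixed ball and then apply the window estimate'' idea you mention as a fallback; in the paper it is the main line, not the fallback, and it removes the need for any subsequence or burn-in argument.
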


The random field $\{u(t, x)\}_{(t, x) \in [0, \infty[ \times [0, 1]}$ can also be viewed as a process parameterized only by time and taking values in the space of continuous functions which satisfies the strong Markov property. In Section \ref{section3.2}, we follow \cite{DaZ96, DaZ14, Wal86} to present the canonical Markov system associated with the solution. We will recall from \cite{Fu83} the invariant measure of the solution and deduce a recurrence property based on the law of Brownian bridge. Intuitively, the recurrence property implies that the solution visits the set $A$ infinitely many times  with a positive probability. In Section \ref{section3.4}, we show that the lower bound on hitting probabilities still holds if the solution starts from a non-vanishing initial value, which extends the corresponding results in \cite{DKN07}, and this lower bound is uniform with respect to the initial value, provided the initial value belongs to a fixed ball in the space of continuous functions. We finally give the proof of Theorem~\ref{th1.1} in Section \ref{section3.5}.

 \section{Strong Markov property and invariant measure} \label{section3.2}

 In this section, we present the strong Markov property and the invariant measure  of the solution to \eqref{eq1.1}, which are well known facts in the literature; see Da Prato and Zabczyk \cite{DaZ96, DaZ14} and Funaki \cite{Fu83}. 
 
We first write the equation \eqref{eq1.1} in the abstract form that fits in the framework of \cite{DaZ96, DaZ14}.  Denote the Hilbert space $H=L^2([0, 1], \mathbb{R}^d)$, and $L(H)$ the {space of bounded operators} from $H$ to $H$. We introduce the Sobolev spaces of $\mathbb{R}^d$-valued functions $H^2(]0, 1[, \mathbb{R}^d)$, $H^1_0(]0, 1[, \mathbb{R}^d)$ (see \cite[Section 5.2.2]{Eva98} for the definition). Consider
\begin{align}\label{equation:ab}
\begin{cases}
{d} u(t)= Au(t) + F(u(t)) + {d}W(t), \\
u(0)=u_0\in H,
\end{cases}
\end{align}
where $W$ is a cylindrical Brownian motion on $H$,
 $A$ is the Laplace operator
\begin{align*}
A\xi= \left( \frac{\partial }{\partial x^2}\xi_1, \ldots,  \frac{\partial }{\partial x^2}\xi_d\right), \quad \text{$\xi=(\xi_1, \ldots, \xi_d)\in H^2(]0, 1[, \mathbb{R}^d)\cap H_0^1(]0, 1[, \mathbb{R}^d)$},
\end{align*}
and $F: H\mapsto H$ is given by
\begin{align}\label{F}
(F(\xi))(x) = \nabla U(\xi(x)), \quad \xi\in H, \, x \in [0, 1].
\end{align}
According to Funaki \cite[Appendix I]{Fu83} (see also \cite[Section 4]{DaQ11}\footnote{\cite{DaQ11} considers the equation on the whole space. The treatment there also applies to equation on interval.}),
equations \eqref{eq1.1} and \eqref{equation:ab} are equivalent in the sense  that
the solution $\{u(t, \cdot): t\geq 0\}$ to equations \eqref{eq1.2} also satisfies the following integral equation
\begin{align}\label{equation:abintegral}
u(t)= S(t)u_0 + \int_0^tS(t-s)F(u(s))ds + \int_0^t S(t-s)d W(s),
\end{align}
where $\{S(t): t\geq 0\}$ is the semigroup of Laplace operator given by 
\begin{align*}
(S(t)\phi)(x) = \int_0^1G(t,x, y)\phi(y)d y, \,\  \text{for $\phi\in H$}. 
\end{align*}
Note that the integral equation \eqref{equation:abintegral} is a special case of \cite[(7.29)]{DaZ14} with $B: H\mapsto L(H)$ given by 
\begin{align}\label{B}
(B(u)\xi)(x)= \xi(x), \, x\in [0, 1], \,\, \text{for $u, \xi\in H$}.
\end{align}

We use the notation $u_{u_0}(t, \cdot)$ to indicate that the solution starts from $u_0$. Denote by $\mathcal{B}_b(H)$ the set of bounded Borel functions on $H$. We introduce the transition semigroup of the solution $\{u(t, \cdot): t\geq 0\}$
\begin{align}\label{semigroup}
P_t\phi(u_0):= \mathrm{E}[\phi(u_{u_0}(t, \cdot))], \, \, u_0\in H, \phi\in \mathcal{B}_b(H).
\end{align}
The semigroup $\{S(t): t\geq 0\}$ satisfies condition (iv) of Hypothesis 7.1 of \cite{DaZ96}; see the second equation on page 61 of \cite{DaZ96}. {Moreover, the map $F$ defined in \eqref{F} satisfies condition (ii) of Hyp. 7.1 of \cite{DaZ14}
because the partial derivatives of $U$ are globally Lipschitz and bounded. And the map $B$ defined in \eqref{B}, which maps all elements of $H$ to the identity map on $H$, clearly satisfies (iii) of Hypothesis 7.1 of \cite{DaZ14}.}
Hence, by Theorem 7.1.1 of \cite{DaZ96}, $P_t$ is a strong Feller semigroup for all $t>0$.
Furthermore, Hypothesis 7.2 of \cite{DaZ14} is satisfied (see the calculation in Example 7.6 for the verification of condition (7.27) of \cite{DaZ14}). Hence, the solution has the Markov property, as stated below.

\begin{prop}[{{\cite[Theorem 9.14]{DaZ14}}}]\label{prop2.2}
For $s$, $t \geq 0$, $u_0 \in H$ and $f \in \mathcal{B}_b(H)$, we have
\begin{align}\label{eq2.1}
\mathrm{E}[f(u_{{u_0}}(t+s, \cdot))|\mathscr{F}_s] = P_tf(u_{u_0}(s, \cdot)), \quad \mathrm{P} \,\,\, \mbox{a.s.}
\end{align}
\end{prop}

We proceed to build the canonical Markov system associated to the solution. 
We denote by $E := \{\phi(\cdot) \in C([0, 1], \mathbb{R}^d): \phi(0) = \phi(1) = 0\}$ equipped with the norm
\begin{align*}
\|\phi(\cdot)\|_{\infty} := \sup\limits_{0 \leq v \leq 1}\sup\limits_{1 \leq i \leq d}|\phi_i(v)|.
\end{align*}
As a two-parameter process, the trajectories $(t, x) \mapsto u(t, x)$ are jointly continuous almost surely  (see \cite{Wal86}). Hence almost surely, $t \mapsto u(t, \cdot)$ is continuous in $E$.
Let $\widetilde{\Omega} := C([0, \infty[, E)$ be the space of continuous functions from $[0, \infty[$ to $E$. Denote by $\mathcal{B}(E)$ and $\mathcal{B}_b(E)$ the Borel $\sigma$-field and the set of bounded Borel measurable functions on $E$, respectively. For a generic element $\tilde{\omega} \in \widetilde{\Omega}$, we write $\tilde{\omega}(t, \cdot)$ to indicate the value at $t$, and the second variable appears since $\tilde{\omega}(t, \cdot) \in C([0, 1], \mathbb{R}^d)$. 
For $t \in [0,\infty[$, define $\tilde{u}(t): \tilde{\Omega} \to E$ by
\begin{align*}
\tilde{u}(t) (\tilde{\omega}) = \tilde{\omega}(t, \cdot).
\end{align*}
Define
\begin{align*}
\widetilde{\mathscr{F}}^0_t := \sigma\{\tilde{u}(s): s \leq t\} \quad \mbox{and} \quad \widetilde{\mathscr{F}}^0_{\infty} := \bigvee\limits_{t \geq 0}\widetilde{\mathscr{F}}^0_t.
 \end{align*}
 {We define a family of probability measures $\{\P^{u_0}: u_0\in E\}$ on $(\widetilde{\Omega}, \widetilde{\mathscr{F}}^0_{\infty})$ by}
\begin{align}\label{eq2.4}
    \mbox{P}^{u_0}(A) := \mbox{P}\{u_{u_0} \in A\}, \qquad \mbox{for} \, \, \, u_0 \in E, \ A \in \widetilde{\mathscr{F}}^0_{\infty},
\end{align}
which is determined uniquely by specifying the values on cylindrical sets, i.e.
\begin{align}\label{eq2.5}
  \mbox{P}^{u_0}\{\tilde{u}(t_1) \in B_1, \ldots, \tilde{u}(t_n) \in B_n\}   &= \mbox{P}\{u_{u_0}(t_1, \cdot) \in B_1, \ldots, u_{u_0}(t_n, \cdot) \in B_n\}
\end{align}
for any $n \geq 1$, $B_1, \ldots, B_n \in \mathcal{B}(E)$, and $t_1,  \ldots,  t_n \geq 0$. We denote by $\mbox{E}^{u_0}$ the corresponding expectation with respect to the probability measure $\mbox{P}^{u_0}$.
{We know that $u_0 \mapsto \mbox{P}^{u_0}(A)$ is measurable for sets $A$ of the form $\{\tilde{u}(t)\in B\}$ by \eqref{eq2.4}, \eqref{semigroup} and the strong Feller property of the transition semigroup $\{P_t:t \geq 0\}$.   Using the Markov property \eqref{eq2.1} and induction, the map  $u_0 \mapsto \mbox{P}^{u_0}(A)$ is measurable for sets $A$ of the form $\{\tilde{u}(t_1) \in B_1, \ldots, \tilde{u}(t_n) \in B_n\}$, and hence the measurability also holds for $A \in \widetilde{\mathscr{F}}^0_{\infty}$ by the monotone class theorem. 
Notice from \eqref{eq2.5} that for all $u_0 \in E$,
$P_t f(u_0) = \E^{u_0}[f(\tilde{u}(t)]$.
 Moreover, we deduce from \eqref{eq2.1} that for $f \in \mathcal{B}_b(E)$,
}
\begin{align}\label{eq2.6}
  \mathrm{E}^{u_0}[f(\tilde{u}(t + s))|\widetilde{\mathscr{F}}^0_s] = \mathrm{E}^{\tilde{u}(s)}[f(\tilde{u}(t))] = P_tf(\tilde{u}(s)), \quad \mbox{P}^{u_0} \, \,  \, \mbox{a.s.}
\end{align}
 Let $\mathcal{N}$ be the collection of sets that are $\mbox{P}^{u_0}$-null for every $u_0 \in E$. Define
 \begin{align*}
 \widetilde{\mathscr{F}}_t := \sigma\{\widetilde{\mathscr{F}}^0_t \cup \mathcal{N}\} \quad \mbox{and} \quad \widetilde{\mathscr{F}}_{\infty} := \bigvee\limits_{t \geq 0}\widetilde{\mathscr{F}}_t.
  \end{align*}
  Since the process $\{\tilde{u}(t, \cdot): t \geq 0\}$ has the Markov property (\ref{eq2.6}) and the semigroup $(P_t)_{t \geq 0}$ has the strong Feller property, by Proposition 20.7 of \cite{Bas11}, we  know  that the filtration $(\widetilde{\mathscr{F}}_t)_{t \geq 0}$ is right continuous. Furthermore, we have the following strong Markov property.

\begin{theorem}[Strong Markov property]\label{th2.3}
Suppose $T$ is a finite stopping time with respect to $(\widetilde{\mathscr{F}}_t)_{t \geq 0}$ and $Y$ is bounded and measurable with respect to $\widetilde{\mathscr{F}}_{\infty}$. Then
\begin{align}\label{eq2.7}
\mathrm{E}^{u_0}[Y\circ \theta_T|\widetilde{\mathscr{F}}_T] = \mathrm{E}^{\tilde{u}(T)}[Y],
\end{align}
where $(\theta_t)_{t \geq 0}$ is the shift operator defined by
\begin{align*}
 \theta_t\tilde{\omega}(s, x) := \tilde{\omega}(t + s, x), \qquad \mbox{for} \, \,\, \, \, \tilde{\omega} \in \widetilde{\Omega}, \, \, (s, x) \in [0, \infty[\times [0, 1].
 \end{align*}
\end{theorem}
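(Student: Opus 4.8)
The statement is the classical strong Markov property, obtained from the ordinary Markov property \eqref{eq2.6}, the right continuity of the paths $t\mapsto\tilde{u}(t)$ in $E$, and the strong Feller property of $(P_t)_{t\ge 0}$, by the standard three-step argument (cf. \cite{Bas11}). First I would prove the \emph{simple Markov property at the stopping time $T$}: for every $f\in\mathcal{B}_b(E)$ and every $s\ge 0$,
\begin{equation}\label{eq:smpT}
\mathrm{E}^{u_0}[f(\tilde{u}(T+s))\mid\widetilde{\mathscr{F}}_T]=P_sf(\tilde{u}(T))=\mathrm{E}^{\tilde{u}(T)}[f(\tilde{u}(s))],\qquad \mathrm{P}^{u_0}\text{-a.s.},
\end{equation}
where the last equality is the identity $P_sf(v)=\mathrm{E}^v[f(\tilde{u}(s))]$ noted after \eqref{eq2.5}. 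I would approximate $T$ from above by the dyadic stopping times $T_n:=2^{-n}(\lfloor 2^nT\rfloor+1)$, so that $T_n\downarrow T$, $T_n>T$, and $T_n$ is $2^{-n}\IN$-valued. For $A\in\widetilde{\mathscr{F}}_T\subseteq\widetilde{\mathscr{F}}_{T_n}$, decomposing according to the value of $T_n$ (noting $A\cap\{T_n=k2^{-n}\}\in\widetilde{\mathscr{F}}_{k2^{-n}}$) and applying the ordinary Markov property \eqref{eq2.6} at the deterministic time $k2^{-n}$ gives
\begin{equation*}
\mathrm{E}^{u_0}[f(\tilde{u}(T_n+s))\1_A]=\sum_{k\ge 1}\mathrm{E}^{u_0}[f(\tilde{u}(k2^{-n}+s))\1_{A\cap\{T_n=k2^{-n}\}}]=\mathrm{E}^{u_0}[P_sf(\tilde{u}(T_n))\1_A].
\end{equation*}
Letting $n\to\infty$: by right continuity of the paths $\tilde{u}(T_n+s)\to\tilde{u}(T+s)$ and $\tilde{u}(T_n)\to\tilde{u}(T)$ in $E$, so if moreover $f$ and $P_sf$ are bounded and continuous — the latter being exactly the strong Feller property — bounded convergence yields $\mathrm{E}^{u_0}[f(\tilde{u}(T+s))\1_A]=\mathrm{E}^{u_0}[P_sf(\tilde{u}(T))\1_A]$ for all $A\in\widetilde{\mathscr{F}}_T$. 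Since $\tilde{u}(T)$ is $\widetilde{\mathscr{F}}_T$-measurable (path continuity makes $\tilde{u}$ progressively measurable), \eqref{eq:smpT} follows for $f\in C_b(E)$, and then for all $f\in\mathcal{B}_b(E)$ by a monotone class argument, $C_b(E)$ being a multiplicative class that generates $\mathcal{B}(E)$ and both sides of \eqref{eq:smpT} being stable under bounded monotone limits.

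Second, I would bootstrap \eqref{eq:smpT} to finite‑dimensional functionals: for $0\le s_1\le\cdots\le s_n$, $f_1,\dots,f_n\in\mathcal{B}_b(E)$ and any finite stopping time $T$,
\begin{equation}\label{eq:smpFD}
\mathrm{E}^{u_0}\Big[\prod_{j=1}^n f_j(\tilde{u}(T+s_j))\,\Big|\,\widetilde{\mathscr{F}}_T\Big]=\mathrm{E}^{\tilde{u}(T)}\Big[\prod_{j=1}^n f_j(\tilde{u}(s_j))\Big],\qquad\mathrm{P}^{u_0}\text{-a.s.},
\end{equation}
by induction on $n$, the case $n=1$ being \eqref{eq:smpT}. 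For the inductive step with $s_1>0$, set $S:=T+s_1$ (a finite stopping time with $\widetilde{\mathscr{F}}_T\subseteq\widetilde{\mathscr{F}}_S$); conditioning on $\widetilde{\mathscr{F}}_S$, pulling out the $\widetilde{\mathscr{F}}_S$‑measurable factor $f_1(\tilde{u}(S))$, and applying the inductive hypothesis at $S$ to $f_2,\dots,f_n$ at times $s_2-s_1,\dots,s_n-s_1$ yields $h(\tilde{u}(S))$ with $h:=f_1 g\in\mathcal{B}_b(E)$ and $g(v):=\mathrm{E}^v[\prod_{j\ge 2}f_j(\tilde{u}(s_j-s_1))]$ Borel measurable (by the measurability discussion preceding \eqref{eq2.6}). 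Taking $\mathrm{E}^{u_0}[\,\cdot\mid\widetilde{\mathscr{F}}_T]$ and using \eqref{eq:smpT} at $T$ gives $\mathrm{E}^{\tilde{u}(T)}[h(\tilde{u}(s_1))]$, and the ordinary Markov property \eqref{eq2.6} under $\mathrm{P}^v$ identifies $\mathrm{E}^v[h(\tilde{u}(s_1))]$ with $\mathrm{E}^v[\prod_{j=1}^n f_j(\tilde{u}(s_j))]$; the case $s_1=0$ is similar, pulling $f_1(\tilde{u}(T))$ out directly and using $\tilde{u}(0)=v$ $\mathrm{P}^v$-a.s.

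Third, the functionals $Y=\prod_{j=1}^n f_j(\tilde{u}(s_j))$ with $f_j\in C_b(E)$, $s_j\ge 0$, form a multiplicative class generating $\widetilde{\mathscr{F}}^0_\infty$; the maps $Y\mapsto\mathrm{E}^{u_0}[Y\circ\theta_T\mid\widetilde{\mathscr{F}}_T]$ and $Y\mapsto\mathrm{E}^{\tilde{u}(T)}[Y]$ are linear, agree on this class by \eqref{eq:smpFD}, and are stable under bounded monotone limits, so by the functional monotone class theorem they agree for all bounded $\widetilde{\mathscr{F}}^0_\infty$-measurable $Y$, and hence for all bounded $\widetilde{\mathscr{F}}_\infty$-measurable $Y$ since modifying $Y$ on a set in $\mathcal{N}$ changes neither side. (Here $Y\circ\theta_T$ is $\widetilde{\mathscr{F}}_\infty$-measurable because $(t,\tilde{\omega})\mapsto\theta_t\tilde{\omega}$ is jointly measurable and $T$ is a stopping time, and $v\mapsto\mathrm{E}^v[Y]$ is measurable by the monotone class argument recalled in the excerpt.) This yields \eqref{eq2.7}. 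The step I expect to be most delicate is the first one: it is the only place where path continuity and the strong Feller property are genuinely used, and it depends on the small measurability facts ($\tilde{u}(T)$ being $\widetilde{\mathscr{F}}_T$-measurable, $A\cap\{T_n=k2^{-n}\}\in\widetilde{\mathscr{F}}_{k2^{-n}}$, and joint measurability of the shift) being in place.
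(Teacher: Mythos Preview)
Your argument is correct and is precisely the standard textbook proof that the paper defers to: the paper's own proof is a one-line reference to Theorem~20.9 of \cite{Bas11} (and \cite[Theorem~9.20]{DaZ14}), and what you have written is essentially that proof spelled out. One small remark: at the limiting step in your first part you only need $P_sf\in C_b(E)$ for $f\in C_b(E)$, i.e.\ the ordinary Feller property (which is what the paper invokes), rather than the strong Feller property; the latter is of course available here and implies the former, so nothing is lost.
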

\begin{proof}
The proof is similar to that of Theorem~20.9 in \cite[p.164]{Bas11}, since only the Feller property of the semigroup $\{P_t: t\geq 0\}$ and the Markov property in \eqref{eq2.6} are needed; see also \cite[Theorem 9.20]{DaZ14}.
\end{proof}

We next introduce the invariant measure of the solution. 
{Let $\mu_0$ be the law on $C([0, 1], \mathbb{R}^d)$ of $d$ independent Brownian bridges on $[0,1]$.}
According to \cite[Theorem 4.1]{Fu83} (see also \cite[Theorem 8.6.3]{DaZ96}), the invariant measure of the solution to \eqref{eq1.1} is given by 
\begin{align}\label{invariant}
\mu(d\phi) =\frac{1}{\rm Z}\exp\left(2\int_0^1U(\phi(x))dx\right)\mu_0(d\phi), \quad \phi\in E,
\end{align}
where $\rm Z$ is a finite positive constant given by
\begin{align*}
{\rm Z}= \int_E  \exp\left(2\int_0^1U(\phi(x))dx\right)\mu_0(d\phi).
\end{align*}
Denote $B(0, R)= \{\phi\in E: \|\phi\|_{\infty}<R\}$, $\bar{B}(0, R)= \{\phi\in E: \|\phi\|_{\infty}\leq R\}$ and $B(0, R)^c= \{\phi\in E: \|\phi\|_{\infty}\geq R\}$.

\begin{lemma}\label{lemma2018-09-14-1}
For all $R > 0$,  $\mu(B(0, R)) > 0$ and $\mu(B(0, R)^{c}) > 0$.
\end{lemma}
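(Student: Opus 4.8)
The plan is to reduce both inequalities to the corresponding facts for the Gaussian reference measure $\mu_0$, the law of $d$ independent standard Brownian bridges on $[0,1]$, exploiting that $\mu$ and $\mu_0$ are mutually absolutely continuous with a strictly positive density. Write the invariant density from \eqref{invariant} as $g(\phi):=\frac{1}{{\rm Z}}\exp\left(2\int_0^1 U(\phi(x))\,dx\right)$. Since $U$ is bounded from above, $g$ is $\mu_0$-integrable and ${\rm Z}\in\,]0,\infty[$, while being an exponential $g$ is strictly positive at every $\phi\in E$. Hence for any $A\in\mathcal{B}(E)$ we have $\mu(A)>0$ if and only if $\mu_0(A)>0$: indeed $\mu(A)=\int_A g\,d\mu_0$, a strictly positive integrand integrates to something strictly positive over any set of positive $\mu_0$-measure, and $\mu\ll\mu_0$ gives the reverse implication. (If a quantitative statement is wanted, note that the bounded gradient of $U$ makes $U$ Lipschitz, so $g$ is bounded above and below by positive constants on each ball $\bar{B}(0,R)$.) It thus suffices to show $\mu_0(B(0,R))>0$ and $\mu_0(B(0,R)^c)>0$ for every $R>0$.

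Next I would verify these two facts about Brownian bridges. Let $\beta=(\beta_1,\dots,\beta_d)$ have law $\mu_0$, with the $\beta_i$ independent standard Brownian bridges. By independence, $\mu_0(B(0,R))=\prod_{i=1}^d\mathrm{P}\{\sup_{0\le t\le1}|\beta_i(t)|<R\}$, and each factor is positive: using the representation $\beta_i(t)=B_i(t)-tB_i(1)$ with $B_i$ a standard Brownian motion gives $\sup_{[0,1]}|\beta_i|\le 2\sup_{[0,1]}|B_i|$, and the small-ball bound $\mathrm{P}\{\sup_{[0,1]}|B_i|<R/2\}>0$ is classical (equivalently, $0$ belongs to the topological support of Wiener measure on $C_0([0,1])$). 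For the complement, $\|\beta\|_\infty\ge|\beta_1(1/2)|$ and $\beta_1(1/2)$ is a centered Gaussian of variance $1/4$, so $\mathrm{P}\{\|\beta\|_\infty\ge R\}\ge\mathrm{P}\{|\beta_1(1/2)|\ge R\}>0$. Combining with the previous paragraph yields $\mu(B(0,R))>0$ and $\mu(B(0,R)^c)>0$.

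I do not expect a genuine obstacle here: the argument is essentially the remark that $\mu$ and $\mu_0$ share the same null sets plus two elementary properties of Brownian bridges. The only point requiring care is the justification that $g$ is $\mu_0$-a.s. finite and strictly positive, which is exactly where the standing hypothesis that $U$ is bounded above is used (to guarantee ${\rm Z}<\infty$). An alternative, equally short route avoids the small-ball estimate: the topological support of $\mu_0$ in $(E,\|\cdot\|_\infty)$ is all of $E$, since the Cameron--Martin space $H^1_0(]0,1[,\mathbb{R}^d)$ is dense in $E$ for the sup-norm; therefore every nonempty open subset of $E$ has positive $\mu_0$-measure, and both $B(0,R)$ and the interior $\{\phi\in E:\|\phi\|_\infty>R\}$ of $B(0,R)^c$ are nonempty and open.
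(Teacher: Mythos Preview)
Your proof is correct. The first half---reducing to $\mu_0$ via the strictly positive density $g$, and handling $\mu_0(B(0,R))>0$ through the representation $\beta_i(t)=B_i(t)-tB_i(1)$ and a Brownian small-ball estimate---is essentially the same route the paper takes, though the paper writes out the explicit distribution functions $F(R)=\mathrm{P}\{\sup_{[0,1]}|B_0|\le R\}$ and $H(x)=\mathrm{P}\{\sup_{[0,1]}|B|\le x\}$ from Billingsley and Chung rather than arguing qualitatively.

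The genuine difference is in the treatment of $\mu(B(0,R)^c)>0$. The paper stays with the explicit formula for $F$, differentiates it, checks that $F'(R)>0$ for $R>1$, and then uses the mean-value theorem to show $\mu_0(B(0,2M)\setminus B(0,M))\ge (F(2M)-F(M))^d>0$ for $M>R\vee 1$, finally bounding the density below on this annulus via $\inf_{|y|\le 2M}U(y)$. Your argument---either evaluating the bridge at $t=1/2$ and using that $\beta_1(1/2)\sim N(0,1/4)$, or invoking that the Cameron--Martin space is dense so the support of $\mu_0$ is all of $E$---is shorter and avoids any computation. What the paper's approach buys is an explicit lower bound in terms of classical special functions; what yours buys is simplicity and a clear conceptual picture (equivalence of null sets plus full support of the Gaussian reference measure). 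Both are perfectly adequate for the lemma as stated, since only positivity is used downstream in \eqref{eq3.17}--\eqref{eq3.170}.
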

\begin{proof}
We first recall the distribution of the supremum of Brownian bridge on $[0, 1]$. Let $B_0(x)= B(x)-xB(1), x\in [0, 1]$, where $\{B(x): x\in [0, 1]\}$ is the standard one dimensional Brownian motion. 
 From \cite[(9.39)]{Bil99}, we have
\begin{align}\label{eq3.15}
    F(R) := \mbox{P}\left\{\sup\limits_{0 \leq x \leq 1}|B_0(x)| \leq R\right\} = 1 + 2\sum\limits_{k = 1}\limits^{\infty}(-1)^ke^{-2k^2R^2},\quad \mbox{for all} \, \,R > 0.
\end{align}
We observe that the distribution function $R\mapsto F(R)$ defined in \eqref{eq3.15} takes values in $]0, 1[$ for all $R > 0$. To see this, from the expression of the alternating series in \eqref{eq3.15}, it is clear that $F(R) < 1$ for all $R > 0$. In order to see that the distribution function $F$ is strictly positive, we apply the triangle inequality,
\begin{align*}
F(R) &\geq \mbox{P}\left\{\sup\limits_{0 \leq x \leq 1}|B(x)| + |B(1)| \leq R\right\} \\
& \geq \mbox{P}\left\{\sup\limits_{0 \leq x \leq 1}|B(x)|  \leq R/2\right\} = H(R/2),
\end{align*}
where
\begin{align*}
H(x) = \frac{4}{\pi}\sum_{k = 0}^{\infty}\frac{(-1)^k}{2k + 1}\exp\left[-\frac{(2k + 1)^2\pi^2}{8x^2}\right], \quad \mbox{for} \,\,\, x > 0
\end{align*}
 denotes the distribution function of the supremum of the absolute value of Brownian motion; see \cite[p.233]{Chu01}. It is clear from the expression of the function $H$ that $H(x) > 0$ for all $x > 0$. Hence we have proved $F(R) > 0$ for all $R>0$.

Fix $R>0$. We have 
\begin{align*}
\mu(B(0, R))&=\frac{1}{\rm Z}\int_{\{\phi\in E:\|\phi\|_{\infty}<R\}} \exp\left(2\int_0^1U(\phi(x))dx\right)\mu_0(d\phi)\\
& \geq \frac{1}{\rm Z}\exp\left(2\inf_{|y|\leq R}U(y)\right)\mu_0(B(0, R))\\
& =\frac{1}{\rm Z} \exp\left(2\inf_{|y|\leq R}U(y)\right) F(R)^d>0.
\end{align*}

We proceed to show that $\mu(B(0, R))^c>0$ for all $R>0$. Taking the derivative of the function defined in \eqref{eq3.15}, 
\begin{align}\label{eq3.15}
    F'(R) =\frac{1}{8R}\sum\limits_{k = 1}\limits^{\infty}(-1)^{k+1}k^2R^2e^{-2k^2R^2},\quad \mbox{for all} \, \,R > 0.
\end{align}
Since the function $r\mapsto re^{-2r}$ is decreasing on $[\frac12, \infty[$, we see that $F'(R)>0$ for all $R>1$.
For fixed $R>0$, we can choose $M>R\vee 1$ and then 
\begin{align*}
\mu(B(0, R)^c)&\geq \mu(B(0, 2M)\setminus B(0, M))\\
&=
\int_{\{\phi\in E: M\leq \|\phi\|_{\infty}<2M\}}  \exp\left(2\int_0^1U(\phi(x))dx\right)\mu_0(d\phi)\\
& \geq \frac{1}{\rm Z} \exp\left(2\inf_{|y|\leq 2M}U(y)\right)  \mu_0(B(0, 2M)\setminus B(0, M))\\
& \geq \frac{1}{\rm Z} \exp\left(2\inf_{|y|\leq 2M}U(y)\right) (F(2M)-F(M))^d>0,
\end{align*}
where the last inequality holds by the mean-value theorem and $F'(R)>0$ for all $R>1$. The proof is complete. 
\end{proof}

We proceed to verify that the transition semigroup $\{P_t: t\geq 0\}$ is irreducible, using Theorem 7.3.1 of \cite{DaZ96}. Notice that Hypothesis 5.1 of \cite{DaZ96} is the same as Hypothesis 7.2 of \cite{DaZ14}. Hence, the conditions of Hypothesis 5.1 of \cite{DaZ96} have been verified (see the discussion above \eqref{eq2.1}). Moreover, since we assume that all the partial derivatives of $U$ are bounded, hence the map $F$ given in \eqref{F} is bounded. Furthermore, because $B(\xi)$ is the identity map for all $\xi\in H$, it clearly meets the condition in Theorem 7.3.1 of \cite{DaZ96}. Hence, the equation \eqref{equation:ab} satisfies all the conditions in Theorem 7.3.1 of \cite{DaZ96}. Hence, we obtain that the transition semigroup $\{P_t: t\geq 0\}$ is irreducible. Therefore, we combine the strong Feller property and irreducible property of the transition semigroup $\{P_t: t\geq 0\}$ with Proposition 4.1.1 and Theorem 4.2.1 of \cite{DaZ96} to conclude that
 for all $R > 0$,
 \begin{align}\label{eq3.17}
    \lim\limits_{t \rightarrow \infty}P_t((u_0, B(0, R)) = \mu(B(0, R)) > 0, \quad \mbox{for all} \, \, \ u_0 \in E.
\end{align}
We refer to \cite[Theorem 11.2.4]{DaZ96} for the corresponding result for one single equation.
{Using the same reason as for \eqref{eq3.17}}, we deduce that
\begin{align}\label{eq3.170}
    \lim\limits_{t \rightarrow \infty}P_t((u_0, B(0, R)^{c}) = \mu(B(0, R)^{c}) > 0, \quad \mbox{for all} \, \, \ u_0 \in E.
\end{align}
We combine \eqref{eq3.17}, \eqref{eq3.170} with Corollary 3.4.6 of \cite{DaZ96} to obtain the following.
\begin{theorem}\label{th3.3}
The Markov process $(\widetilde{\Omega},\widetilde{\mathscr{F}}_t, \tilde{u}(t), \mathrm{P}^{u_0}, P_t, \theta_t)$ is recurrent with respect to $B(0, R)$ and $B(0, R)^c$ for any $R > 0$, i.e., for any $u_0 \in E$,
\begin{align}\label{eq3.18}
   & \mathrm{P}^{u_0}\{\tilde{u}(t) \in B(0, R), \ \mbox{for an unbounded set of t} > 0\} = 1,
\end{align}
and
\begin{align}
   & \mathrm{P}^{u_0}\{\tilde{u}(t) \in B(0, R)^c, \ \mbox{for an unbounded set of t} > 0\} = 1.
\end{align}
\end{theorem}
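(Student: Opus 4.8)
The plan is to reduce both assertions to a single abstract statement: if $\Gamma\subseteq E$ is a Borel set with $P_t(u_0,\Gamma)\to\mu(\Gamma)>0$ for every $u_0\in E$, then $\P^{u_0}\{\tilde u(t)\in\Gamma\text{ for an unbounded set of }t>0\}=1$ for every $u_0\in E$. Applying this with $\Gamma=B(0,R)$ (using \eqref{eq3.17}) gives \eqref{eq3.18}, and with $\Gamma=B(0,R)^c$ (using \eqref{eq3.170}) gives the second identity. I would set $A:=\{\tilde\omega\in\widetilde\Omega:\ \{t>0:\tilde u(t)(\tilde\omega)\in\Gamma\}\text{ is unbounded}\}$, which lies in $\widetilde{\mathscr F}_\infty$ by joint path continuity and the right continuity of $(\widetilde{\mathscr F}_t)$ (and likewise the auxiliary events below), and study $q(u_0):=\P^{u_0}(A)$. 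The key structural fact is that $A$ is shift-invariant: since $\tilde u(t)(\theta_s\tilde\omega)=\tilde u(t+s)(\tilde\omega)$ and deleting a bounded initial time-interval does not change unboundedness, $\theta_s^{-1}(A)=A$, so $\mathbf 1_A\circ\theta_s=\mathbf 1_A$ for all $s\ge0$.

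First I would show that $q$ is a bounded continuous harmonic function for $(P_t)$. Applying Theorem~\ref{th2.3} with the constant stopping time $T=s$ and $Y=\mathbf 1_A$ (equivalently \eqref{eq2.6} together with $\mathbf 1_A\circ\theta_s=\mathbf 1_A$), one gets $\E^{u_0}[\mathbf 1_A\mid\widetilde{\mathscr F}_s]=q(\tilde u(s))$, and taking expectations yields $q(u_0)=\E^{u_0}[q(\tilde u(s))]=P_sq(u_0)$ for every $s\ge0$. Since $P_s$ is strong Feller for $s>0$, the identity $q=P_sq$ shows that $q$ is continuous and bounded on $E$. Next I would bound $q$ from below: for $T\ge0$ the events $A_T:=\{\exists\,t\ge T:\tilde u(t)\in\Gamma\}$ decrease to $A$ as $T\uparrow\infty$ and satisfy $\P^{u_0}(A_T)\ge P_t(u_0,\Gamma)$ for every $t\ge T$, so letting $t\to\infty$ and then $T\to\infty$ gives $q(u_0)\ge\mu(\Gamma)>0$ for all $u_0\in E$.

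It remains to pass from ``$q$ harmonic and strictly positive'' to ``$q\equiv1$''; this is the only genuinely non-routine step, and it is exactly the ergodicity input behind Corollary~3.4.6 of \cite{DaZ96}. The argument I have in mind: the semigroup $(P_t)$ is strong Feller and irreducible and admits the invariant measure $\mu$, so by Doob's theorem (Theorem~4.2.1 of \cite{DaZ96}) $\mu$ is the unique invariant measure, $\mu$ is ergodic, and $P_t(u_0,\cdot)\to\mu$ (weakly, indeed in total variation) for every $u_0$. Hence, $q$ being bounded continuous and $P_tq=q$, we get $q(u_0)=\lim_{t\to\infty}P_tq(u_0)=\int_E q\,d\mu=:c$ for every $u_0$, so $q\equiv c$. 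Finally, under the stationary measure $\P^\mu:=\int_E\P^{u_0}\,\mu(du_0)$ the coordinate process is stationary and, $\mu$ being ergodic, the shift-invariant event $A$ satisfies $\P^\mu(A)\in\{0,1\}$; but $\P^\mu(A)=\int_E q\,d\mu=c\ge\mu(\Gamma)>0$, forcing $c=1$. Therefore $\P^{u_0}(A)=1$ for all $u_0\in E$, completing the proof. The main obstacle, then, is not any computation but making precise the ergodic/zero-one-law mechanism that upgrades a positive bounded harmonic function to the constant $1$; everything else is bookkeeping with the Markov property and the convergence \eqref{eq3.17}--\eqref{eq3.170}.
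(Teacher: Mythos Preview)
Your proposal is correct and follows essentially the same route as the paper: both reduce the recurrence assertions to the convergences \eqref{eq3.17} and \eqref{eq3.170} together with the ergodic theory for strong Feller, irreducible semigroups from \cite{DaZ96}. The paper simply invokes Corollary~3.4.6 of \cite{DaZ96} at this point, whereas you unpack its content (shift-invariance of the tail event, the bounded harmonic function $q=P_tq$, constancy via Doob's theorem, and the zero--one law under $\P^\mu$); you yourself note this is ``exactly the ergodicity input behind Corollary~3.4.6 of \cite{DaZ96}.''
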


\section{Lower bound on the hitting probability for solutions with a bounded initial value}\label{section3.4}

{We first recall a general criterion on the lower bound of hitting probabilities for a continuous random field, established in \cite[Theorem 2.1(1)]{DKN07}. }We denote $\Delta$ the parabolic metric defined by $\Delta((t, x); (s, y)) := |t - s|^{1/2} + |x - y|.$
\begin{theorem}[{{\cite[Theorem~2.1(1)]{DKN07}}}] \label{th2017-12-18-1}
 Fix two compact intervals {$I$ and $J$ of $\mathrm{R}$}. Suppose that $\{v(t, x)\}_{(t, x) \in I\times J}$ is a two-parameter continuous random field with values in $\mathbb{R}^d$, such that $(v(t, x), v(s, y))$ has a joint probability density function $p_{t, x; s, y}(\cdot, \cdot)$, for all $s, t\in I$ and $x, y\in J$ with $(t, x) \neq (s, y)$. We denote by $p_{t, x}(\cdot)$ the density function of $v(t, x)$. Assume the following hypotheses:
\begin{enumerate}
  \item[\textbf{A1}] For all $M > 0$, there exists a positive and finite constant $C = C(I, J, M, d)$ such that for all $(t, x) \in I \times J$ and all $z \in [-M, M]^d$,
  \begin{align} \label{eq4.1}
    p_{t, x}(z) \geq C.
  \end{align}

  \item[\textbf{A2}] There exists $\beta > 0$ such that for all $M > 0$, there exists $c = c(I, J, M, d) > 0$ such that for all $s, t \in I$ and $x, y \in J$ with $(t, x) \neq (s, y)$, and for every $z_1, z_2 \in [-M, M]^d$,
     \begin{align}\label{eq4.2}
        p_{t, x; s, y}(z_1, z_2) \leq \frac{c}{[\Delta((t, x); (s, y))]^{\beta/2}}\exp\left(-\frac{\|z_1 - z_2\|^2}{c\Delta((t, x); (s, y))}\right).
     \end{align}
\end{enumerate}
Then the following statement holds.
\begin{enumerate}
  \item[] Fix $M > 0$. There exists a positive and finite constant $a = a(I, J, \beta, M, d)$ such that for all Borel sets $A \subseteq [-M, M]^d$,
  \begin{align}\label{eq4.3}
    \mbox{P}\{v(I \times J) \cap A \neq \emptyset\} \geq a \, \rm{Cap}_{\beta - 6}(A).
  \end{align}
\end{enumerate}
\end{theorem}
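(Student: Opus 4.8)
My plan is to run the classical energy (second--moment) method on the compact parameter rectangle $I\times J$ and then pass from a fixed regularization scale to the hitting event by a reverse Fatou argument. First I would reduce to compact targets: since $\mathrm{Cap}_{\beta-6}$ of a Borel set is the supremum of the capacities of its compact subsets, it suffices to establish \eqref{eq4.3} for compact $A\subseteq[-M,M]^d$ with $\mathrm{Cap}_{\beta-6}(A)>0$. By the variational description of capacity I would then fix a probability measure $\mu$ carried by $A$ with finite, nearly minimal $(\beta-6)$-energy, say
\begin{align*}
\mathcal{E}_{\beta-6}(\mu):=\int\!\!\int K_{\beta-6}(\|z_1-z_2\|)\,\mu(dz_1)\,\mu(dz_2)\leq \frac{2}{\mathrm{Cap}_{\beta-6}(A)},
\end{align*}
where $K_\alpha(r)=r^{-\alpha}$ for $\alpha>0$, $K_0$ is the logarithmic kernel, and $K_\alpha\equiv 1$ for $\alpha<0$.

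The central object is a regularized occupation measure of $A$ by the field. For $\epsilon>0$ write $p_\epsilon(w)=(2\pi\epsilon)^{-d/2}\exp(-\|w\|^2/(2\epsilon))$ and set
\begin{align*}
J_\epsilon:=\int_{\R^d}\mu(dz)\int_I\int_J p_\epsilon\big(v(t,x)-z\big)\,dx\,dt.
\end{align*}
For the first moment I would invoke \textbf{A1}, applied with the enlarged level $M+1$ so that unit balls around points of $[-M,M]^d$ are covered: then $\E[p_\epsilon(v(t,x)-z)]\geq C\int_{[-M-1,M+1]^d}p_\epsilon(w-z)\,dw\geq C/2$ for all small $\epsilon$, whence $\E[J_\epsilon]\geq c_1>0$ uniformly in small $\epsilon$. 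For the second moment I would expand $\E[J_\epsilon^2]$ as a fourfold space--time integral against $\mu\otimes\mu$ of $\E[p_\epsilon(v(t,x)-z_1)\,p_\epsilon(v(s,y)-z_2)]$; on the event that $v(t,x)$ and $v(s,y)$ both lie in $[-M-1,M+1]^d$ I would write this expectation through the joint density $p_{t,x;s,y}$ and bound it with \textbf{A2}, while the complementary event contributes at most $(2\pi\epsilon)^{-d}\e^{-1/(2\epsilon)}|I|^2|J|^2$, which is negligible as $\epsilon\downarrow 0$. Performing the Gaussian integration in the inner variables replaces the singular weight $\Delta^{-\beta/2}$ by $(\Delta+\epsilon)^{-\beta/2}$ and smears the Gaussian factor from scale $\Delta=\Delta((t,x);(s,y))$ to scale $\Delta+\epsilon$, so the main term is
\begin{align*}
\lesssim \int\!\!\int \mu(dz_1)\,\mu(dz_2)\int_I\!\int_J\!\int_I\!\int_J \frac{1}{(\Delta+\epsilon)^{\beta/2}}\exp\!\left(-\frac{\|z_1-z_2\|^2}{c(\Delta+\epsilon)}\right)dt\,dx\,ds\,dy.
\end{align*}

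To control this inner integral I would use that $\{(t,x,s,y)\in(I\times J)^2:\Delta\leq r\}$ has Lebesgue measure $\lesssim r^3$ (this cubic growth is the source of the $-6$ in $\mathrm{Cap}_{\beta-6}$). A layer--cake argument followed by the substitution $u=\Delta+\epsilon$ and the bound $\Delta^2\leq u^2$ reduces it to $\lesssim\int_0^{c_0}u^{2-\beta/2}\e^{-\|z_1-z_2\|^2/(cu)}\,du$ with $c_0=c_0(I,J)$, uniformly in $\epsilon\leq 1$; a one-variable estimate in the cases $\beta>6$, $\beta=6$ and $\beta<6$ shows this is $\lesssim K_{\beta-6}(\|z_1-z_2\|)+1$. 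Integrating against $\mu\otimes\mu$, using the choice of $\mu$ and the boundedness of $A$ (to absorb the additive constant into $1/\mathrm{Cap}_{\beta-6}(A)$), gives $\E[J_\epsilon^2]\leq C_2/\mathrm{Cap}_{\beta-6}(A)$, uniformly in small $\epsilon$. Paley--Zygmund then yields $\P\{J_\epsilon\geq\tfrac12\E[J_\epsilon]\}\geq\tfrac14(\E[J_\epsilon])^2/\E[J_\epsilon^2]\geq a\,\mathrm{Cap}_{\beta-6}(A)$ with $a:=c_1^2/(4C_2)$, and since $\E[J_\epsilon]\geq c_1$ this forces $\P\{J_\epsilon\geq c_1/2\}\geq a\,\mathrm{Cap}_{\beta-6}(A)$ for all small $\epsilon$. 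Lastly, if $v(I\times J)\cap A=\emptyset$ then these two compact sets are at positive distance, so $J_{1/n}\to 0$; hence $\{v(I\times J)\cap A\neq\emptyset\}\supseteq\limsup_n\{J_{1/n}\geq c_1/2\}$, and by the reverse Fatou lemma $\P\{v(I\times J)\cap A\neq\emptyset\}\geq\limsup_n\P\{J_{1/n}\geq c_1/2\}\geq a\,\mathrm{Cap}_{\beta-6}(A)$, which is \eqref{eq4.3}.

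The step I expect to be the main obstacle is the second-moment estimate, and within it the $\epsilon$-uniform bound on the fourfold parabolic integral: one has to match the $r^3$ volume growth of parabolic balls against the kernel $K_{\beta-6}$ and verify carefully that the Gaussian regularization only shifts $\Delta$ to $\Delta+\epsilon$, so that all constants are independent of $\epsilon$. The reductions, the first-moment bound, Paley--Zygmund, and the reverse Fatou passage are routine by comparison.
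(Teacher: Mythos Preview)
The paper does not prove this theorem; it is quoted verbatim from \cite[Theorem~2.1(1)]{DKN07} and followed only by a remark observing that the bound extends from compact to Borel targets by inner approximation, and that the constant $a$ depends only on the constants $C$ and $c$ from \textbf{A1} and \textbf{A2}. Your proposal is precisely the second-moment (energy) method used in that reference: Gaussian mollification of the occupation measure, a first-moment lower bound from \textbf{A1}, a second-moment upper bound from \textbf{A2} combined with the cubic volume growth $|\{\Delta\le r\}|\lesssim r^3$ of parabolic balls (which produces the exponent $\beta-6$), Paley--Zygmund, and a limiting argument via compactness and reverse Fatou. So your approach is correct and coincides with the one behind the cited result.
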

\begin{remark}
\begin{itemize}
\item [(1)]The estimate \eqref{eq4.3} in Theorem 2.1 of \cite{DKN07} holds only for compact sets. However, since we can approximate the capacity of a Borel set by the capacity of compact sets as done in the proof of  Proposition 5.2 of \cite{DKN07},  the estimate
\eqref{eq4.3} is also true for Borel sets. 
\item [(2)] The constant $a$ in \eqref{eq4.3} does not depend on the particular random field $v$, but is determined from the constants $C$ and $c$ in  \eqref{eq4.1} and \eqref{eq4.2} as well as $I, J, \beta, M$ and $d$.
\end{itemize}
\end{remark}

Theorem \ref{th2017-12-18-1} applies to the solution to equations \eqref{eq1.1} ($u_0\equiv 0$, $b\equiv 0$) subject to Neumann boundary conditions,  with $I=[t_0, T]$ ($t_0>0$), $J=[0, 1]$ and $\beta = d$; see \cite[Proposition 4.1]{DKN07}. 
It also applies to the case of Dirichlet boundary conditions with $J$ be away from boundaries (see \cite[Remark 4.7]{DKN07}).
 We will apply Theorem  \ref{th2017-12-18-1}  to derive a uniform lower bound on hitting probabilities for the solution $\{u(t, x)\}_{(t, x) \in I\times J}$ to \eqref{eq1.1}, where the constant will not depend on the specific choice of initial data $u_0\in \bar{B}(0, N)$, but only on  $N$.
 
\begin{lemma}\label{lemma2018-09-14-2}
 {Fix $M, N > 0$, $\epsilon>0$ and two compact intervals $I \subset ]0, \infty[\,, J\subset ]0, 1[$}.  There exists a finite positive constant $a = a(I, J, N, M, d, \epsilon)$ such that for all Borel sets $A \subseteq [-M, M]^d$, and for all $u_0 \in \bar{B}(0, N)$,
  \begin{align}\label{eq4.10}
    \mathrm{P}\{u_{u_0}(I \times J) \cap A \neq \emptyset\} \geq a\, \left(\rm{Cap}_{d - 6}(A)\right)^{1+\epsilon},
  \end{align}
and equivalently,
 \begin{align}\label{eq4.11}
    \mathrm{P}^{u_0}\{\tilde{u}(I \times J) \cap A \neq \emptyset\} \geq a\, \left(\rm{Cap}_{d - 6}(A)\right)^{1+\epsilon}.
  \end{align}
\end{lemma}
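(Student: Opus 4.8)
The plan is to reduce the uniform lower bound for the solution with initial data $u_0 \in \bar B(0,N)$ to the known bound \eqref{eq4.3} of Theorem~\ref{th2017-12-18-1} for the solution started from $u_0 \equiv 0$, at the cost of an extra exponent $1+\epsilon$. First I would decompose the mild solution \eqref{equation:abintegral} as $u_{u_0}(t,x) = w(t,x) + v(t,x)$, where $w(t,x) = (S(t)u_0)(x) + \int_0^t (S(t-s)F(u_{u_0}(s)))(x)\,ds$ collects the deterministic-initial-data part together with the drift term, and $v(t,x) = \int_0^t (S(t-s)\,dW(s))(x)$ is the stationary Gaussian part (the solution with $u_0\equiv0$, $b\equiv0$). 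Since $U$ is bounded from above and its partial derivatives are bounded, and $\|S(t)\|_{L^\infty\to L^\infty}\le 1$, the ``error'' field $w$ is bounded uniformly in $(t,x)\in I\times J$ and uniformly over $u_0\in\bar B(0,N)$: indeed $\|w(t,x)\|\le N + |I|\,\sup\|\nabla U\|=:K$, with $K=K(I,N,d)$. Thus $u_{u_0}(I\times J)\cap A\supseteq$ the event that $v$ hits the enlarged set $A-w(t,x)$ at the same space-time point; more precisely, if $v(t,x)\in A - w(t,x)$ for some $(t,x)$ then $u_{u_0}(t,x)\in A$.

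The natural next step is a conditioning/Girsanov-type argument rather than a crude set enlargement, because $A - w(t,x)$ moves with $(t,x)$ and need not be contained in a fixed cube. The cleanest route: condition on $\mathscr{F}_\infty^W$-measurable data is awkward since $w$ depends on the whole solution; instead I would appeal to a Cameron–Martin / Girsanov change of measure that removes the drift $F(u_{u_0}(\cdot))$ and replaces the deterministic initial condition's contribution. Concretely, under an equivalent measure $\mathbb{Q}$ the field $u_{u_0}$ has the law of $v + S(\cdot)u_0$; the Radon–Nikodym derivative $\frac{d\mathbb{Q}}{d\mathrm P}$ has moments bounded uniformly in $u_0\in\bar B(0,N)$ because $F$ is bounded, and by the Novikov condition with a bounded integrand it is in every $L^p$. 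Then for any event $\mathcal E$, $\mathrm P(\mathcal E) = \mathbb E^{\mathbb Q}[\frac{d\mathrm P}{d\mathbb Q}\mathbf 1_{\mathcal E}] \ge (\mathbb E^{\mathbb Q}[(\frac{d\mathbb Q}{d\mathrm P})^{q}])^{-1/q}\,(\mathbb Q(\mathcal E))^{1+\epsilon'}$ by Hölder with a suitable exponent — this is exactly where the exponent $1+\epsilon$ in \eqref{eq4.10} is produced, with $\epsilon$ tunable by choosing the Hölder exponent. It therefore suffices to bound $\mathbb Q\{(v+S(\cdot)u_0)(I\times J)\cap A\ne\emptyset\}$ from below, uniformly in $u_0\in\bar B(0,N)$.

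For the latter, $S(t)u_0$ is a deterministic continuous field bounded by $N$ on $I\times J$, so $(v+S(\cdot)u_0)(t,x)\in A$ whenever $v(t,x)\in A - S(t)u_0$. One still has a moving target; to finish I would verify directly that the Gaussian field $v$ satisfies hypotheses \textbf{A1} and \textbf{A2} of Theorem~\ref{th2017-12-18-1} not just for $A\subseteq[-M,M]^d$ but, with constants depending on $M$ and $N$ through $M+N$, and then apply the energy/capacity argument underlying Theorem~\ref{th2017-12-18-1} to the shifted field $\hat v(t,x):=v(t,x)+S(t)u_0(x)$ directly: $\hat v$ still has, for each $(t,x)$, a density bounded below on $[-M,M]^d$ by a constant depending only on $M,N,I,J,d$ (a Gaussian density shifted by a bounded mean), and the two-point density bound \eqref{eq4.2} is unchanged by a deterministic shift except for the Gaussian factor, which is controlled on the relevant range by enlarging $c$. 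Hence \eqref{eq4.3} applies to $\hat v$ with a constant uniform over $u_0\in\bar B(0,N)$, giving $\mathrm P\{u_{u_0}(I\times J)\cap A\ne\emptyset\}\ge a\,\mathrm{Cap}_{d-6}(A)\ge a\,(\mathrm{Cap}_{d-6}(A))^{1+\epsilon}$ since capacities of sets in a fixed bounded region are bounded. This last observation actually shows one can even take $\epsilon=0$; the statement allows $\epsilon>0$ presumably to accommodate the Girsanov route, so I would present whichever of the two arguments is cleaner — the direct density-estimate argument avoids Girsanov entirely and is likely the intended one.

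The main obstacle is the interplay between the drift term and the initial data: one must show that the contribution $\int_0^t S(t-s)F(u_{u_0}(s))\,ds$, which depends on the full (non-Gaussian) solution, does not spoil either the lower bound on the one-point density or the Gaussian-type upper bound on the two-point density. The Girsanov device handles this at the price of the exponent $1+\epsilon$; doing it by a direct argument requires controlling the conditional law of $u_{u_0}(t,x)$ given the past, which is Gaussian with a mean bounded by $K$ — so the one-point density is still bounded below, but establishing the joint two-point density bound \eqref{eq4.2} for the full solution (as opposed to the purely Gaussian $v$) needs the Malliavin-calculus density estimates from \cite{DKN07}, checked to be uniform over $u_0\in\bar B(0,N)$. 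Equivalently and more economically, the obstacle is dispatched by the boundedness of $F$ together with the uniform $L^p$ bound on the Girsanov density, which is where the finiteness of $\sup\|\nabla U\|$ is essential.
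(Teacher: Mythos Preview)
Your proposal is correct and follows the paper's argument: verify \textbf{A1}--\textbf{A2} of Theorem~\ref{th2017-12-18-1} directly for the drift-free field $\hat v=v+S(\cdot)u_0$ with constants uniform over $u_0\in\bar B(0,N)$ (this gives the bound with exponent~$1$), then use Girsanov and H\"older's inequality to pass to the solution with drift, which is exactly where the exponent $1+\epsilon$ enters. One detail you glossed over: for \textbf{A2}, mere boundedness of the deterministic shift $\lambda(t,x):=(S(t)u_0)(x)$ is \emph{not} enough---after writing $p^{\hat v}_{t,x;s,y}(z_1,z_2)=p^{v}_{t,x;s,y}(z_1-\lambda(t,x),z_2-\lambda(s,y))$ and using $(a-b)^2\ge\tfrac12 a^2-b^2$, you pick up a factor $\exp\bigl(\|\lambda(t,x)-\lambda(s,y)\|^2/(c\,\Delta)\bigr)$, and to keep it bounded you need the regularity estimate $\|\lambda(t,x)-\lambda(s,y)\|^2\le C_N\,\Delta((t,x);(s,y))$, which the paper proves explicitly from the eigenfunction expansion of $G$. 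Finally, the paper's route is Girsanov (hence the $\epsilon$), not the Malliavin alternative you float at the end; your remark that ``one can even take $\epsilon=0$'' applies only to the drift-free field $\hat v$, not to $u_{u_0}$ itself.
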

\begin{proof}
The statement \eqref{eq4.11}  is a consequence of \eqref{eq4.10} and  (\ref{eq2.4}) and we only need to prove \eqref{eq4.10}.
We first prove that in the case that the drift term vanishes, namely, $\nabla U\equiv 0$, 
there exists a finite positive constant $K = K(I, J, N, M, d)$ such that for all Borel sets $A \subseteq [-M, M]^d$, and for all $u_0 \in \bar{B}(0, N)$,
  \begin{align}\label{eq4.102}
    \mathrm{P}\{u_{u_0}(I \times J) \cap A \neq \emptyset\} \geq K\, \mbox{Cap}_{d - 6}(A),
  \end{align}
and then apply Girsanov’s theorem to deduce \eqref{eq4.10} from \eqref{eq4.102}.

Assume $\nabla U\equiv 0$.
In order to prove \eqref{eq4.102}, by Theorem \ref{th2017-12-18-1}, it suffices to prove that hypotheses \textbf{A1} and \textbf{A2} are satisfied for $\{u(t, x)\}_{(t, x) \in I\times J}$, where the constants depend additionally on $N$, but not on the specific choice of $u_0$. 
For $(t, x)\in [0, \infty[\, \times [0, 1]$, let $v(t, x) = (v_1(t, x), \ldots, v_d(t, x))$ and $\lambda(t, x) = (\lambda_1(t, x), \ldots, \lambda_d(t, x))$, where
\begin{align}
  v_i(t, x) &= \int_0^t\int_0^1 G(t - r, x, v) W_i(dr, dv), \label{eq:v}\\
  \lambda_i(t, x) &= \int_0^1 G(t, x, v)u_0^i(v)dv. \label{eq2018-08-23-1}
\end{align}
We add the superscripts $u$ or $v$  to the probability density functions to indicate to which random field they correspond.

{\em Verification of \textbf{A1}}. Fix $M > 0$ and let $z \in [-M, M]^d$. Then for all $(t, x) \in I \times J$, the probability density function of $u(t, x)$ is given by
\begin{align}\label{eq4.4}
    p_{t, x}^u(z) = \frac{1}{(2\pi \sigma_{t, x}^2)^{d/2}}\exp\left(-\frac{\|z - \lambda(t, x)\|^2}{2\sigma_{t, x}^2}\right),
\end{align}
where
\begin{align}\label{eq4.5}
    \sigma_{t, x}^2 := \mbox{Var}(u_i(t, x)) = \int_0^tdr\int_0^1dv \, (G(t - r, x, v))^2.
\end{align}
Since $(t, x) \mapsto v_i(t, x)$ is $L^2$-continuous by (4.11) of \cite{DKN07}, it follows that the function $(t, x) \mapsto \sigma_{t, x}$ achieves its minimum $\rho_1 > 0$ and its maximum $\rho_2 < \infty$ over $I \times J$. Moreover, since $u_0 \in \bar{B}(0, N)$, from \eqref{eq2018-08-23-1}, it is clear that $\sup_{(t, x) \in I \times J}\|\lambda(t, x)\| \leq c_N$ for some constant $c_N$. Thus, for all $(t, x) \in I \times J$ and all $z \in [-M, M]^d$,
\begin{align*}
    p_{t, x}^u(z) \geq \frac{1}{(2\pi \rho_2^2)^{d/2}}\exp\left(-\frac{(M^2 + c_N^2)d}{2\rho_1^2}\right),
\end{align*}
which proves \textbf{A1}.

{\em Verification of \textbf{A2}}. First, we give some estimates on the regularity of the function $(t, x) \mapsto \lambda(t, x)$ on $I \times J$. Denote $t_0=\inf\{t: t\in I\}>0$.

{\em Case 1: $s = t$, $x \neq y$}. For all $t \in I$, $x, y \in J$ and $1 \leq i \leq d$,
\begin{align}
  |\lambda_i(t, x) - \lambda_i(t, y)| &=  \left|\int_0^1\sum\limits_{k = 1}\limits^{\infty}e^{-\pi^2k^2t}(\phi_k(x) - \phi_k(y))\phi_k(v)u_0^i(v)dv\right| \nonumber\\
    &\leq  2N\sum\limits_{k = 1}\limits^{\infty}e^{-\pi^2k^2t}|\sin(k\pi x) - \sin(k\pi y)| \nonumber\\
    &\leq  2N\pi \sum\limits_{k = 1}\limits^{\infty}ke^{-\pi^2k^2t_0}|x - y| \leq c_1N|x - y|.  \label{eq3.11}
\end{align}

{\em Case  2: $x = y$, $s < t$}. For all $t$, $s \in I$,
\begin{align}
   |\lambda_i(t, x) - \lambda_i(s, x)| &= \left|\int_0^1\sum\limits_{k = 1}\limits^{\infty}(e^{-\pi^2k^2t} - e^{-\pi^2k^2s})\phi_k(x)\phi_k(v)u_0^i(v)dv\right|  \nonumber\\
    &\leq \int_0^1\sum\limits_{k = 1}\limits^{\infty}e^{-\pi^2k^2s}|1 - e^{-\pi^2k^2(t - s)}|\, |\phi_k(x)\phi_k(v)u_0^i(v)|dv  \nonumber\\
    &\leq 2N\sum\limits_{k = 1}\limits^{\infty}e^{-\pi^2k^2t_0}|1 - e^{-\pi^2k^2(t - s)}| \nonumber\\
    &\leq 2N\pi^2(t-s)\sum\limits_{k = 1}\limits^{\infty}k^2e^{-\pi^2k^2t_0} \leq c_2N(t-s),\label{eq4.7} 
    \end{align}
    where the third inequality is due to the fact that $1-e^{-x}\leq x$ for all $x\geq0$.

Hence, \eqref{eq3.11} and \eqref{eq4.7} together imply that there exists a constant $\tilde{c}$ such that for all $s, t \in I$, $x, y \in J$, $1 \leq i \leq d$,
\begin{align}\label{eq4.8}
    (\lambda_i(t, x) - \lambda_i(s, y))^2 \leq \tilde{c}\, N^2((t - s)^2 + (x - y)^2).
\end{align}

Using the upper bound on the joint probability density function of $(v(t, x), v(s, y))$ (see of \cite[(2.3), Theorem 4.6 and {Remark 4.7}]{DKN07}), the elementary equality $(a - b)^2 \geq \frac{1}{2}a^2 - b^2$ and (\ref{eq4.8}), we obtain that
\begin{align}
   p_{t, x; s, y}^u(z_1, z_2) &=  p_{t, x; s, y}^v(z_1 - \lambda(t, x), z_2 - \lambda(s, y)) \nonumber\\
    &\leq \frac{c}{[\Delta((t, x); (s, y))]^{d/2}}\exp\left(-\frac{\|z_1 - \lambda(t, x) - z_2 + \lambda(s, y)\|^2}{c\, \Delta((t, x); (s, y))}\right)  \nonumber\\
    &\leq \frac{c}{[\Delta((t, x); (s, y))]^{d/2}}\exp\left(-\frac{\frac{1}{2}\|z_1 - z_2\|^2 - \|\lambda(t, x) - \lambda(s, y)\|^2}{c\, \Delta((t, x); (s, y))}\right)  \nonumber\\
    &\leq  \frac{\tilde{c}_N}{[\Delta((t, x); (s, y))]^{d/2}}\exp\left(-\frac{\|z_1 - z_2\|^2}{2c\,\Delta((t, x); (s, y))}\right),
\end{align}
which proves \textbf{A2}.

Therefore, in the case $\nabla U\equiv 0$, the lower bound on hitting probabilities in \eqref{eq4.102} follows from the result of Theorem \ref{th2017-12-18-1}.

For the general drift term, denote $b=(b_1, \ldots, b_d):=\nabla U$.  Girsanov’s theorem (see \cite[(5.4)--(5.6)]{DKN07}) yields that 
\begin{align*}
  \mathrm{P}\{u_{u_0}(I \times J) \cap A \neq \emptyset\} = \mathrm{E}[\bm{1}_{\{[(v+\lambda)(I \times J)] \cap A \neq \emptyset\}}J_t^{-1}],
\end{align*}
where $v$ and $\lambda$ are defined in \eqref{eq:v} and  \eqref{eq2018-08-23-1} and
\begin{align*}
J_t&=\exp\bigg(-\int_0^t\int_0^1G(t-s,x, y) b((v+\lambda)(s, y))\cdot W(ds, dy)\\
& \qquad \qquad\qquad
+\frac12\int_0^t\int_0^1G^2(t-s, x, y)\|b((v+\lambda)(s, y))\|^2dsdy
\bigg).
\end{align*}
We apply H\"older's inequality (see \cite[(5.5) and (5.6)]{DKN07}) to obtain that for $\epsilon>0$
\begin{align}\label{lower_uniform}
  \mathrm{P}\{u_{u_0}(I \times J) \cap A \neq \emptyset\} &\geq    
  \left(\mathrm{P}\{[(v+\lambda)(I \times J)] \cap A \neq \emptyset\}\right)^{1+ \epsilon} 
  \left(\mathrm{E}\left[J_t^{1/\epsilon}\right]\right)^{-\epsilon}\nonumber\\
  & \geq \left(K\, \mbox{Cap}_{d - 6}(A)\right)^{1+\epsilon}\left(\mathrm{E}\left[J_t^{1/\epsilon}\right]\right)^{-\epsilon},
  \end{align}
where the second inequality holds by \eqref{eq4.102}. Since the functions $b_i, i=1, \ldots, d$ are bounded, we can use the same argument as in \cite[(5.7)]{DKN07} to see that there exists a constant $C(\epsilon)>0$ depending only on $\epsilon$ such that
\begin{align}\label{J_t}
\left(\mathrm{E}\left[J_t^{1/\epsilon}\right]\right)^{-\epsilon} \geq C(\epsilon).
\end{align}
Therefore, the uniform lower bound on hitting probabilities in \eqref{eq4.10} follows from \eqref{lower_uniform} and \eqref{J_t}. 
\end{proof}

\section{Proof of Theorem~\ref{th1.1}}\label{section3.5}

We first state a result on hitting probabilities for general Markov processes, which will be used to prove  Theorem \ref{th1.1}.

\begin{prop}\label{prop2017-12-18-1}
Let $(\Omega, \mathscr{F}_t, X(t), \theta_t, \mathrm{P}^{x})_{t \geq 0, x \in \mathcal{E}}$ be a continuous Markov system taking values in the Banach space $\mathcal{E}$, which has the strong Markov property.  Fix $K > N > 0$ and a Borel set $\mathcal{A} \subset \mathcal{E}$. Suppose that the process $\{X(t): t \geq 0\}$ is recurrent with respect to $B(0, N)$ and $B(0, K)^{c}$, and that there exists a positive constant $c = c(N, K, \mathcal{A})$ such that for all $u_0 \in \bar{B}(0, N)$,
\begin{align}\label{eq2017-12-18-1}
\mathrm{P}^{u_0}\{\exists\, t \in [0, T_1], \, \, \, \mbox{s.t.}  \, \, \,  X(t) \in \mathcal{A}\} \geq c,
\end{align}
where $T_1 := \inf\{t \geq 0: \|X(t)\| > K\}$. Then for any $x \in \bar{B}(0, N)$,
\begin{align}\label{eq2017-12-18-2}
\mathrm{P}^{u_0}\{\exists \, t \geq 0, \, \, \, \mbox{s.t.}  \, \, \,  X(t) \in \mathcal{A}\} = 1.
\end{align}
\end{prop}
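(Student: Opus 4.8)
The plan is to exploit the recurrence of the process with respect to $B(0,N)$ together with the strong Markov property to manufacture infinitely many independent attempts at hitting $\mathcal{A}$, each succeeding with probability at least $c$. I would set up a sequence of stopping times by alternating between the two regions. Specifically, define $\tau_0 := \inf\{t \ge 0 : \|X(t)\| \le N\}$, which is finite $\mathrm{P}^{u_0}$-a.s.\ by the recurrence with respect to $B(0,N)$ (and equals $0$ when the starting point is already in $\bar B(0,N)$). Then inductively set $\sigma_n := \inf\{t \ge \tau_n : \|X(t)\| > K\}$ and $\tau_{n+1} := \inf\{t \ge \sigma_n : \|X(t)\| \le N\}$. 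Recurrence with respect to $B(0,K)^c$ makes each $\sigma_n$ finite a.s., and recurrence with respect to $B(0,N)$ makes each $\tau_{n+1}$ finite a.s.; continuity of paths ensures $\|X(\tau_n)\| = N$ (so $X(\tau_n) \in \bar B(0,N)$) and $\|X(\sigma_n)\| = K$. One checks these are all $(\mathscr{F}_t)$-stopping times.

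The crux is then a conditional Borel--Cantelli / independence argument. Let $E_n$ be the event that $X$ hits $\mathcal{A}$ during the time window $[\tau_n, \sigma_n]$. By the strong Markov property applied at the stopping time $\tau_n$, using the shift operator $\theta_{\tau_n}$, and writing the event $\{X \text{ hits } \mathcal{A} \text{ on } [0,T_1]\}$ composed with $\theta_{\tau_n}$, we get
\begin{align*}
\mathrm{P}^{u_0}(E_n \mid \widetilde{\mathscr{F}}_{\tau_n}) = \mathrm{P}^{X(\tau_n)}\{\exists\, t \in [0,T_1],\ X(t) \in \mathcal{A}\} \ge c,
\end{align*}
where the inequality is exactly hypothesis \eqref{eq2017-12-18-1}, valid because $X(\tau_n) \in \bar B(0,N)$ and $\sigma_n$ relative to the post-$\tau_n$ process is precisely $T_1 \circ \theta_{\tau_n}$. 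Here one must be slightly careful: $T_1$ in the hypothesis is defined from the starting point, and the windows $[\tau_n,\sigma_n]$ are disjoint (indeed $\sigma_n \le \tau_{n+1} \le \sigma_{n+1}$), so no double-counting occurs. Since $\sigma_n \le \tau_{n+1}$, the events $E_n$ are each measurable with respect to $\widetilde{\mathscr{F}}_{\tau_{n+1}}$, and the bound $\mathrm{P}^{u_0}(E_{n+1} \mid \widetilde{\mathscr{F}}_{\tau_{n+1}}) \ge c$ lets us iterate: $\mathrm{P}^{u_0}\big(\bigcap_{n=0}^{m} E_n^c\big) \le (1-c)^{m+1}$. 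Letting $m \to \infty$ gives $\mathrm{P}^{u_0}\big(\bigcap_{n \ge 0} E_n^c\big) = 0$, so $\mathrm{P}^{u_0}$-a.s.\ some $E_n$ occurs, which forces $\{\exists\, t \ge 0:\ X(t) \in \mathcal{A}\}$, establishing \eqref{eq2017-12-18-2}.

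The main obstacle I anticipate is the careful bookkeeping of the stopping times and the measurability statements needed to invoke the strong Markov property cleanly: one must verify that $\tau_n, \sigma_n$ are genuine stopping times for the (right-continuous, completed) filtration $(\widetilde{\mathscr{F}}_t)$, that they are finite almost surely (this is where both recurrence statements of Theorem~\ref{th3.3} get used, and one needs that ``an unbounded set of $t$'' in those statements implies the relevant hitting times are finite, using path continuity to pass between open and closed balls), and that the event ``$X$ hits $\mathcal{A}$ on the random window $[\tau_n,\sigma_n]$'' can be written as $\big(\{\exists t \in [0,T_1]: X(t)\in\mathcal{A}\}\big) \circ \theta_{\tau_n}$ so that \eqref{eq2017-12-18-1} applies verbatim. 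A minor point worth stating explicitly is that $X(\tau_n) \in \bar B(0,N)$ rather than merely $B(0,N)^c$ or the boundary — this follows from continuity of $t \mapsto \|X(t)\|$ since $X$ is a continuous Markov system — so that the uniform lower bound \eqref{eq2017-12-18-1} is applicable at each renewal epoch. Once these technical points are in place, the geometric decay estimate is immediate.
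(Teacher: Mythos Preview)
Your proposal is correct and follows essentially the same approach as the paper: alternating stopping times between exits from the large ball and returns to the small ball, applying the strong Markov property at each return to get the uniform lower bound $c$, and iterating to obtain the geometric decay $(1-c)^n$. The paper's $S_{k-1}$ and $T_k$ are your $\tau_{k-1}$ and $\sigma_{k-1}$ (modulo the harmless open/closed distinction in defining the return times), and the paper is somewhat terser about the measurability and finiteness checks that you spell out.
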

\begin{proof}
Due to (4.1), we have $c \leq 1$, and there is nothing to prove if $c=1$, so we assume $0 < c < 1$.
By the recurrence property, we define inductively two sequences of finite stopping times $(T_k)_{k \geq 0}$ and $(S_k)_{k \geq 0}$, as follows. Let $S_0 = 0$, and for $k \geq 1$,
\begin{align*}
    T_k = \inf\{t \geq S_{k - 1}: \|X(t)\| > K\}, \quad  S_k = \inf\{t \geq T_k: \|X(t)\| < N\},
\end{align*}
which satisfy that $T_k = S_{k - 1} + T_1\circ\theta_{S_{k - 1}}$.

For each $k \geq 1$, we set $\mathcal{A}_k := \{\exists \, t \in [S_{k - 1}, T_k] ,\ \mbox{s.t.}\ X(t) \in \mathcal{A}\}$. By the strong Markov property,
\begin{align}
    \mbox{P}^{u_0}\{\mathcal{A}_k|\mathscr{F}_{S_{k - 1}}\}
    &=  \mbox{P}^{u_0}\{\exists \, t \in [S_{k - 1}, S_{k - 1} + T_1\circ\theta_{S_{k - 1}}],\ \mbox{s.t.}\ X(t) \in \mathcal{A} |\mathscr{F}_{S_{k - 1}}\} \nonumber\\
    &=  \mbox{P}^{u_0}\{\exists \, t \in [0, T_1\circ\theta_{S_{k - 1}}] ,\ \mbox{s.t.}\ X(t + S_{k - 1}) \in \mathcal{A} |\mathscr{F}_{S_{k - 1}}\} \nonumber\\
    &=  \mbox{E}^{u_0}[\bm{1}_{\{\exists \, t \in [0, T_1],\ \mbox{s.t.}\ X(t) \in \mathcal{A}\}}\circ\theta_{S_{k - 1}} |\mathscr{F}_{S_{k - 1}}] \nonumber \\
    &= \mbox{E}^{X(S_{k - 1})}[\bm{1}_{\{\exists\,  t \in [0, T_1],\ \mbox{s.t.}\ X(t) \in \mathcal{A}\}}] \nonumber\\
    &\geq c,
    \end{align}
where the inequality is due to \eqref{eq2017-12-18-1}. Therefore, for any integer $n \geq 1$,
\begin{align}
 &\mbox{P}^{u_0}\{\exists\,  t \geq 0, \, \, \, \mbox{s.t.}  \, \, \,  X(t) \in \mathcal{A}\} \nonumber \\
    &\quad \geq  \mbox{P}^{u_0}\left(\bigcup\limits_{k = 1}\limits^{n}\mathcal{A}_k\right) =  1 - \mbox{P}^{x}\left(\bigcap\limits_{k = 1}\limits^{n}\mathcal{A}_k^c\right) \nonumber\\
    &\quad=  1 - \mbox{E}^{u_0}\left[\left(\prod\limits_{k = 1}\limits^{n - 1}\bm{1}_{\mathcal{A}_k^c}\right)\mbox{P}^{x}\{\mathcal{A}_n^c|\mathscr{F}_{S_{n - 1}}\}\right] \nonumber\\
    &\quad=  1 - \mbox{E}^{u_0}\left[\left(\prod\limits_{k = 1}\limits^{n - 1}\bm{1}_{\mathcal{A}_k^c}\right)(1 - \mbox{P}^{x}\{\mathcal{A}_n|\mathscr{F}_{S_{n - 1}}\})\right] \nonumber\\
    &\quad\geq  1 - (1 - c)\mbox{E}^{u_0}\left[\prod\limits_{k = 1}\limits^{n - 1}\bm{1}_{\mathcal{A}_k^c}\right] \geq  1 - (1 - c)^n, \label{eq5.4}
\end{align}
where we repeat the argument to get the last inequality. Letting $n \rightarrow \infty$, we obtain \eqref{eq2017-12-18-2}.
\end{proof}

\begin{proof}[Proof of Theorem \ref{th1.1}]
Let ${\rm Cap}_{d-6}(A)>0$.
We assume that $u_0 \in \bar{B}(0, N)$ and $A \subseteq [-N, N]^d$ for some $N > 0$. 
Recall $v_i$ and $\lambda_i$ in \eqref{eq:v} and \eqref{eq2018-08-23-1}.
First,
we give an estimate on the following tail probability. Using \eqref{eq1.2}, we see that  for any $K > N+2 \|\nabla U\|_{\infty}$ (here, $ \|\nabla U\|_{\infty}:=\sup_{1\leq i\leq d}\sup_{y\in \R^d}|\frac{\partial U}{\partial u_i}(y)|$),
\begin{align}
 \mbox{P}\left\{\sup\limits_{0 \leq t \leq 2}\|u_{u_0}(t, \cdot)\|_{\infty} \geq K\right\}   &\leq  \sum\limits_{i = 1}\limits^{d}\mbox{P}\left\{\sup\limits_{0 \leq t \leq 2}\sup\limits_{0 \leq x \leq 1}|u_i(t, x)| \geq K\right\} \nonumber\\
    &\leq \sum\limits_{i = 1}\limits^{d}\mbox{P}\left\{2\|\nabla U\|_{\infty}+\sup\limits_{0 \leq t \leq 2}\sup\limits_{0 \leq x \leq 1}|v_i(t, x) + \lambda_i(t, x)| \geq K\right\}  \nonumber\\
    &\leq  d \, \mbox{P}\left\{\sup\limits_{0 \leq t \leq 2}\sup\limits_{0 \leq x \leq 1}|v_i(t, x)| \geq K - N- 2\|\nabla U\|_{\infty}\right\}. \label{eq5.1}
\end{align}
Since $(t, x) \mapsto v_i(t, x)$ is continuous almost surely, we have
\begin{align*}
    \lim\limits_{K \rightarrow \infty}\mbox{P}\left\{\sup\limits_{0 \leq t \leq 2}\sup\limits_{0 \leq x \leq 1}|v_i(t, x)| \geq K - N- 2\|\nabla U\|_{\infty}\right\} = 0.
\end{align*}
Therefore, letting $a$ be the constant in \eqref{eq4.10}, where $I = [1, 2]$ and {$J \subset ]0, 1[$ is a fixed compact interval}, we can choose sufficiently large $K>N$, depending only on $N$, $\|\nabla U\|_\infty$, $a$ and ${\rm Cap}_{d-6}(A)$, such that
\begin{align*}
   \mbox{P}\left\{\sup\limits_{0 \leq t \leq 2}\|u_{u_0}(t, \cdot)\|_{\infty} \geq K\right\} \leq 2^{-1-\epsilon}a \, \left(\mbox{Cap}_{d - 6}(A)\right)^{1+\epsilon},
\end{align*}
or, equivalently, for all $u_0$ with  $\|u_0\|_{\infty} \leq N$,
\begin{align}\label{eq5.2}
   \mbox{P}^{u_0}\left\{\sup\limits_{0 \leq t \leq 2}\|\tilde{u}(t)\|_{\infty} \geq K\right\} \leq 2^{-1-\epsilon}a\, \left(\mbox{Cap}_{d - 6}(A)\right)^{1+\epsilon}.
\end{align}

By \cite[(5.10)]{DKN07}, we choose a compact set $A' \subset A$ such that
\begin{align}\label{eq2018-09-13-2}
\mbox{Cap}_{d - 6}(A') \geq \frac{2}{3}\mbox{Cap}_{d - 6}(A).
\end{align}
Define \begin{align*}
\mathcal{A}:= \{\phi(\cdot) \in E: \exists\, x \in [0, 1] \, \, \, \mbox{s.t.} \, \, \, \phi(x) \in A'\}.
\end{align*}
It is straightforward to check that $\mathcal{A}$ is a closed (hence Borel) subset of $E$. Notice that
\begin{align} \label{eq2018-09-13-1}
 \mbox{P}\{u_{u_0}([0, \infty[ \times [0, 1]) \cap A \neq \emptyset\} &\geq  \mbox{P}\{u_{u_0}([0, \infty[ \times [0, 1]) \cap A' \neq \emptyset\}\nonumber \\
    &= \mbox{P}^{u_0}\{\tilde{u}([0, \infty[ \times [0, 1]) \cap A' \neq \emptyset\} \nonumber\\
    &= \mbox{P}^{u_0}\{\exists \, t \geq 0, \,\, \mbox{s.t.}\, \, \tilde{u}(t) \in \mathcal{A}\}.
\end{align}
In order to prove Theorem \ref{th1.1}, by \eqref{eq2018-09-13-1}, Proposition \ref{prop2017-12-18-1} and Theorem \ref{th3.3}, it suffices to verify that the estimate for the hitting probability in \eqref{eq2017-12-18-1} holds for the process $\{\tilde{u}(t): t \geq 0\}$, {with $T_1$ given by $T_1=\inf\{t\geq 0: \|\tilde{u}(t)\|_{\infty}>K\}$}. This is indeed the case, since
 \begin{align}
   & \mbox{P}^{u_0}\{\exists \,  t \in [0, T_1] ,\,\, \mbox{s.t.} \, \, \tilde{u}(t) \in \mathcal{A}\} \nonumber\\
    &\quad =  \mbox{P}^{u_0}\{\exists \,  (t, x) \in [0, T_1] \times [0, 1],\ \, \mbox{s.t.}\, \, \tilde{u}(t, x) \in A'\} \nonumber\\
    &\quad \geq  \mbox{P}^{u_0}\left\{\{\exists \,  (t, x) \in [0, T_1] \times [0, 1],\, \, \mbox{s.t.}\, \, \tilde{u}(t, x) \in A'\}\cap\left\{\sup\limits_{0 \leq t \leq 2}\|\tilde{u}(t)\|_{\infty} \leq K \right\}\right\} \nonumber\\
    &\quad \geq  \mbox{P}^{u_0}\left\{\{\exists \,  (t, x) \in [1, 2] \times J,\ \, \mbox{s.t.}\, \, \tilde{u}(t, x) \in A'\}\cap\left\{\sup\limits_{0 \leq t \leq 2}\|\tilde{u}(t)\|_{\infty} \leq K \right\}\right\}   \nonumber\\
    &\quad \geq \mbox{P}^{u_0}\{\exists \,  (t, x) \in [1, 2] \times J,\  \, \mbox{s.t.}\,\, \tilde{u}(t, x) \in A'\}  - \mbox{P}^{u_0}\left\{\sup\limits_{0 \leq t \leq 2}\|\tilde{u}(t)\|_{\infty} \geq K \right\} \nonumber\\
    &\quad \geq a\, \left(\mbox{Cap}_{d - 6}(A') \right)^{1+\epsilon}- 2^{-1-\epsilon}a \,\left(\mbox{Cap}_{d - 6}(A)\right)^{1+\epsilon} \geq  ((2/3)^{1+\epsilon}-2^{-1-\epsilon})a\, \left(\mbox{Cap}_{d - 6}(A)\right)^{1+\epsilon}, \nonumber
\end{align}
where the fourth inequality follows from \eqref{eq4.11} and \eqref{eq5.2} and the last inequality is due to \eqref{eq2018-09-13-2}. 
Hence \eqref{eq2017-12-18-1} is satisfied and this completes the proof of Theorem \ref{th1.1}.
\end{proof}

\begin{remark}
We expect that the conclusion of Theorem \ref{th1.1} remains valid for the solution to  the nonlinear system of stochastic heat equations
\begin{align*}
\frac{\partial u_i}{\partial t}(t, x) &=\frac{\partial^2 u_i}{\partial x^2}(t, x) +\sum_{j = 1}^d\sigma_{ij}(u(t, x))\dot{W}^j(t, x) + b_i(u(t, x))
\end{align*}
with Dirichlet boundary conditions and initial condition $u_0$, where the functions $b_i$ and $\sigma_{ij}$ satisfy hypotheses \textbf{P1} and \textbf{P2} of \cite{DKN09}. However, proving this requires several facts which seem not to be available in the literature.
\begin{itemize}
  \item [(1)] The uniform lower bound in Lemma \ref{lemma2018-09-14-2}.  {The lower bound in \cite[Theorem 1.2(a)]{DKN09} is in terms of $d-6+\eta$-dimensional capacity and  for the solution with vanishing initial conditions. 
 This lower bound was improved in terms of $d-6$-dimensional capacity in \cite{DaP21}; see Theorem 1.3(a) and Remark 1.4 of \cite{DaP21}.
  Following through the proof in \cite{DKN09, DaP21}, this could be extended to other initial conditions, which would yield the uniform bound in Theorem \ref{th2017-12-18-1} needed for Lemma \ref{lemma2018-09-14-2}.}
  \item [(2)] The strong Markov property of Theorem \ref{th2.3} is available; see \cite[Theorem 9.20]{DaZ14}.
  \item [(3)] The recurrence property in Theorem \ref{th3.3} would be needed. This could be deduced from the existence and properties of an invariant measure as above. Existence and uniqueness of an invariant measure are available in \cite{Cer01, LiR15, Sta11}, but the required properties of this invariant measure seem not to be available.
\end{itemize}

\end{remark}

\noindent\textbf{Acknowledgement}.  Part of this paper is based on F. Pu's Ph.D. thesis \cite{Pu18}, written under the supervision of R. C. Dalang.

\vskip1cm
\begin{small}
\noindent\textbf{Robert C. Dalang}
Institut de Math\'ematiques, Ecole Polytechnique
F\'ed\'erale de Lausanne, Station 8, CH-1015 Lausanne,
Switzerland.\\
Email: \texttt{robert.dalang@epfl.ch}\\
\end{small}

\begin{small}
\noindent\textbf{Fei Pu}
Laboratory of Mathematics and Complex Systems,
School of Mathematical Sciences, Beijing Normal University, 100875, Beijing, China.\\
Email: \texttt{fei.pu@bnu.edu.cn}\\
\end{small}
\end{document}